\newtheorem{theorem}{\textbf{Theorem}}[section]
\newtheorem{proposition}[theorem]{\textbf{Proposition}}
\newtheorem{lemma}[theorem]{\textbf{Lemma}}
\newtheorem{corollary}[theorem]{\textbf{Corollary}}
\theoremstyle{definition}
\newtheorem{definition}[theorem]{\textbf{Definition}}
\theoremstyle{remark}
\newtheorem{example}[theorem]{Example}
\numberwithin{equation}{section}
\let\inf\undefined\DeclareMathOperator*{\inf}{inf\vphantom{p}}
\let\min\undefined\DeclareMathOperator*{\min}{min\vphantom{p}}
\let\max\undefined\DeclareMathOperator*{\max}{max\vphantom{p}}
\newcommand{\mul}[3]{\mathrm{Mul}_{#3}(#1,#2)} 
\newcommand{\R}{\ensuremath{\mathbb R}} 
\newcommand{\N}{\ensuremath{\mathbb N}} 
\newcommand{\prob}{\ensuremath{\mathscr{P}}} 
\renewcommand{\d}{\ensuremath{\,\mathrm d}} 
\newcommand{\weakto}{\ensuremath{\rightharpoonup}} 
\newcommand{\eeta}{\boldsymbol \eta}
\newcommand{\ev}{\mathsf e} 
\newcommand{\eps}{\varepsilon}
\newcommand{\E}{\ensuremath{\mathscr E}} 
\renewcommand{\L}{\ensuremath{\mathscr L}} 
\newcommand{\G}{\ensuremath{\mathscr G}} 
\title[Optimal control for multiagent systems with aggregation]{Optimal control for multiagent systems with simultaneous aggregation}
\author{Mauro Bonafini}
\address{Mauro Bonafini: University of Verona, Department of Computer Science, Strada Le Grazie 15, I-37134 Verona (Italy)}
\email{mauro.bonafini@univr.it}
\author{Giulia Cavagnari}
\address{Giulia Cavagnari: Politecnico di Milano, Dipartimento di Matematica, Piazza Leonardo Da Vinci 32, I-20133 Milano (Italy)}
\email{giulia.cavagnari@polimi.it}
\author{Antonio Marigonda}
\address{Antonio Marigonda: University of Verona, Department of Computer Science, Strada Le Grazie 15, I-37134 Verona (Italy)}
\email{antonio.marigonda@univr.it}
\keywords{optimal control, multiagent systems, multiplicity, aggregation}
 \thanks{\emph{Acknowledgments.} This research is supported by INdAM-GNAMPA under the Project 2024 ``Controllo ottimo infinito dimensionale: aspetti teorici ed applicazioni'', CUP E53C23001670001.}		
\begin{document}

\maketitle

\begin{abstract}
In this paper, we introduce an optimal control problem for multi-agent systems with non-local cost which favors simultaneous aggregation of particles. This is done introducing a time-dependent notion of multiplicity whose intrinsic dynamical nature differs from more established geometric-like definitions.

\end{abstract}

\section{Introduction}
The optimal control of multi-agent systems in Wasserstein spaces of probability measures has gained significant interest (cf. e.g. \cite{AJZ,BF,CLOS,JMQ}) due to its applications in swarm robotics, crowd dynamics, and data aggregation. In this work, we study an optimal control problem where the cost functional explicitly favors simultaneous aggregation of particles by introducing a time-dependent notion of multiplicity.
The presence of multiplicity in the cost enforces synchronization in the occupation of spatial points over time, leading to novel aggregation effects and forcing, if the time frame allows, sticky particles evolution as shown in Example \ref{example}. This behavior is particularly desirable e.g. in applications to car sharing or mailing problems.

Unlike traditional approaches where costs are expressed in dependance on solutions of a continuity inclusion determining the admissible trajectories in $\prob(\R^d)$, our formulation considers a probability measure on agents/particles' paths (lifted approach), leveraging the superposition principle from optimal transport theory (cf. \cite{CLOS,CMR}).
We refer to \cite{CMR} for a full-correspondence between these objects, in particular for conditions that ensure the transfer of desired properties to the lifted approach.

We establish, with sufficiently general assumptions, the lower semicontinuity of more general functionals that incorporate non-locality, also in terms of multiplicity of points of $\R^d$, as well as global and instantaneous velocities of individual paths. Up to our knowledge, this result in such a general framework is new in the literature.

Additionally, we prove the existence of minimizers and establish a Dynamic Programming Principle for the value function of our cost with a time-localized Lagrangian, providing a rigorous foundation for optimal control in this setting. The main results are contained in Sections \ref{sec:existence-opt},\ref{sec:DPP}.

\section{Preliminaries and settings}
Let $(X, d_X)$ and $(Y, d_Y)$ be complete separable metric spaces. Following \cite{AGS}, we denote by $\prob(X)$ the space of probability measures on $X$ endowed with the topology of narrow convergence and write $\mu_n \weakto \mu$ if the sequence $\{\mu_n\}_n\subset\prob(X)$ converges to $\mu$ in duality with the set of continuous bounded functions. For future use, we recall that narrow convergence is induced by a distance on $\prob(X)$ (cf. Remark 5.1.1 in \cite{AGS}) and that $\prob(X)$ is separable. 
Given $\mu \in \prob(X)$ and a Borel function $f:X\to Y$, the push-forward of $\mu$ through $f$ is denoted by $f_\sharp \mu\in\prob(Y)$.
The subset of measures in $\prob(X)$ with finite second moment, endowed with the Wasserstein distance $W_2$, is denoted by $\prob_2(X)$.

For $I=[a,b]$, $a \le b$, consider $\Gamma_I:=C^0(I;\R^d)$ endowed with the uniform norm. 
Given $t\in I$, we define the linear and continuous map $\ev_t \colon \Gamma_I \to \R^d, \gamma \mapsto \gamma(t)$ and the map $\dot \ev_t \colon \Gamma_I \to \R^d$ given by
{\small
\[
\dot \ev_t(\gamma):=
\begin{cases}
	\displaystyle\lim_{s\to t}\dfrac{\gamma(s)-\gamma(t)}{s-t}=\lim_{s\to t} \dfrac{\ev_{s}(\gamma)-\ev_{t}(\gamma)}{s-t},
	&\textrm{ if $\gamma$ is differentiable at $t$,}\\
	0,&\textrm{ otherwise}.\end{cases}
\]}
The map $\dot\ev_t(\cdot)$ is Borel measurable and $\dot \ev_t(\gamma)=\dot\gamma(t)$ when $\gamma$ is differentiable at $t$.

\medskip
We now present the setting under consideration. 
\begin{definition}[Multiagent dynamics]\label{def:dynamics}
	Let $I=[a,b]$, $a\le b$. Consider a set-valued map $F:I\times \R^d\times \prob_2(\R^d)\rightrightarrows \R^d$. 
	Given a curve $\mathbf{\theta}=\{\theta_t\}_{t\in I}\subset\prob_2(\R^d)$, let $S^{\mathbf{\theta}}_{I}:\R^d\rightrightarrows \Gamma_I$ and $\boldsymbol{S}_{I}:\prob_2(\R^d) \rightrightarrows \prob(\Gamma_I)$ be the set-valued maps defined by
{\small
	\begin{align*}
		S^{\mathbf{\theta}}_{I}(x):=& \{\gamma\in AC(I;\R^d):\, \dot\gamma(t)\in F(t,\gamma(t), \theta_t)\textrm{ for a.e. $t\in I$},\,\gamma(a)=x\},\\
		\boldsymbol S_{I}(\mu):=& \left\{\eeta\in\prob(\Gamma_{I}):\,\mathrm{supp}\,\big((\ev_a,\mathrm{Id})_\sharp\eeta\big)\subseteq\mathrm{graph}\left(S^{\{{\ev_t}_\sharp\eeta\}_{t\in I}}_{I}\right),\,\ev_{a\sharp}\eeta=\mu\right\}.
	\end{align*}
}
	We assume that $\boldsymbol S_{I}(\cdot)$ has values in $\prob_2(\Gamma_I)$ and its images are $W_2$-compact. These properties are satisfied in particular if $F$ has non-empty compact convex images, it is measurable w.r.t. $t\in I$, Lipschitz continuous w.r.t. $(x,\theta)\in\R^d\times\prob_2(\R^d)$ uniformly w.r.t. $t\in I$ and satisfies an appropriate growth condition. This is discussed e.g. in Section 3 of \cite{CMR} (cf. Lemma 3.5, Corollaries 3.6,3.7).
\end{definition}

\begin{definition}[Multiplicity]\label{def:mult}
	Given $t\in I$, $y\in \R^d$ and $\eeta\in\prob(\Gamma_I)$, we define the \emph{multiplicity} of $\eeta$ at $y$ at time $t$ by
	\[
	\mul{t}{y}{\eeta}:=\eeta\left(\ev_t^{-1}(y)\right)=\eeta\left(\{\gamma\in \Gamma_I : \, y=\gamma(t)\}\right) = ({\ev_t}_\sharp\eeta)\left(\{y\}\right).
	\]
\end{definition}
The multiplicity counts the total mass that passes through the point $y$ at the given time $t$. In more established measure theoretic definitions of multiplicity (cf. e.g. \cite{BCM}), a point is assigned higher multiplicity even if curves visit it at different times. On the other hand, here we stick with a more dynamical perspective and use a localized in time definition, assigning higher multiplicity only for simultaneous occupations of the point $y$ at time $t$.
We now define the cost involved in our optimization problem.
\begin{definition}[Cost functional and variational problem]\label{def:cost}
	Let $I=[a,b]$, $a\le b$. Consider a sequentially l.s.c. Borel function $G \colon \R^d\times\prob(\R^d)\to [0,+\infty]$ and a Borel function $L:I\times \R^d \times \R^d \times \prob(\R^d)\to [0,+\infty]$ such that
	\begin{itemize}
		\item $L(t,\cdot,\cdot,\cdot)$ is sequentially l.s.c. for a.e. $t\in I$,
		\item $L(\cdot,x,z,\mu)$ is Lebesgue measurable for all $(x,z,\mu)\in \R^d\times \R^d \times\prob(\R^d)$,
		\item $L(t,x,\cdot,\mu)$ is convex for all $(t,x,\mu)\in I\times \R^d \times\prob(\R^d)$.
	\end{itemize}
	Given two l.s.c. non-increasing functions $\psi,\phi \colon [0,1]\to [0,+\infty]$ and $\eeta\in\prob(\Gamma_I)$, we define the costs
{\small
	\begin{align*}
		\L_\psi(\eeta) :=&\int_{\Gamma_I} \int_I \psi(\mul{t}{\gamma(t)}{\eeta})\cdot L(t,\gamma(t),\dot\gamma(t),{\ev_t}_\sharp\eeta)\d t \d\eeta(\gamma),\\
		\G_\phi(\eeta) :=&\int_{\Gamma_I}\phi(\mul{b}{\gamma(b)}{\eeta})\cdot G(\gamma(b),\ev_{b\sharp}\eeta)\d\eeta(\gamma), \\
		\E_{\psi,\phi}(\eeta) :=&\L_\psi(\eeta)+\G_\phi(\eeta).
	\end{align*}
}	
	Given $T>0$ and $\mu\in\prob_2(\R^d)$, we are interested in the following problem:
	\[
	\text{minimize the functional $\E_{\psi,\phi}(\eeta)$ over all $\eeta\in \boldsymbol S_{[0,T]}(\mu)$}.
	\tag{P}
	\label{eq:prob}
	\]
\end{definition}

In the next example we show the impact of the time horizon $T$ in the structure of minimizers of \eqref{eq:prob}, due to our choice of multiplicity.
\begin{example}\label{example}
	Let $T>0$, and $I=[0,T]$. In $\mathbb R^2$, consider the points $P_{\pm 1}=(0,\pm 1)$ and $Q_\alpha=(\alpha,0)$ for all $\alpha\in\mathbb R$. Fix $R>0$, and set 
	$\mu_0=\frac12(\delta_{P_1}+\delta_{P_{-1}})$, $\mu_T=\delta_{Q_R}$.
	Suppose that $F(\cdot)\equiv\overline{B(0,1)}$, $L(\cdot)\equiv 1$, $\phi(\cdot)\equiv 1$, $G(x,\mu)=I_{\{\mu_T\}}(\mu)$ is the indicator function at $\mu_T$, $\psi(r)=|\log r|$.
	The problem \eqref{eq:prob} amounts to minimize
	\[\L_\psi(\eeta):=\int_{\Gamma_{I}}\int_I \left|\log\left(({\ev_t}_\sharp\eeta)(\{\gamma(t)\})\right)\right|\d t \d \eeta(\gamma)\]
	among all measures $\eeta\in\prob_2(\Gamma_I)$ supported on the set of $1$-Lipschitz continuous curves such that $e_0(\eeta)=\mu_0$ and $e_T(\eeta)=\mu_T$.

\begin{multicols}{2}
\noindent By symmetry, in order to optimize $\L_\psi$, the two groups of agents will try to meet as soon as possible at a point $Q_\alpha$ along the $x$-axis, $\alpha \in [0,R]$, and then proceed together to the destination. Given the time horizon $T$, such meeting point $Q_\alpha$ needs to verify $\sqrt{1+\alpha^2} + (R - \alpha) \leq T$. Hence, we seek for the minimum non-negative $\alpha$ satisfying $\sqrt{1+\alpha^2} + (R - \alpha) \leq T$.

\begin{tikzpicture}[scale=0.8]
		
		\draw[->] (-0.5,0) -- (4.1,0);
		\draw[->] (0,-2) -- (0,2);
		
		\draw[red,thick] (0,1.5) -- (1,0);
		\draw[red,thick] (0,-1.5) -- (1,0);
		\draw[red,thick] (1,0) -- (3,0);
		
		\foreach \point in {(0,1.5),(0,-1.5),(1,0),(3,0)}{
			\fill[black] \point circle[radius=1pt];}
		
		\draw (-0.1,1.4) node[anchor=south west] {$P_1$};
		\draw (-0.1,-1.4) node[anchor=north west] {$P_{-1}$};
		
		\draw (3,0) node[anchor=north] {$Q_R$};
		\draw (0.8,0) node[anchor=north west] {$Q_\alpha$};
		
		\draw (0.4, 0.15) node {$\alpha$};
		\draw (2, 0.15) node {$R-\alpha$};
		
		\draw (0.4,0.7) node[anchor=south west] {$\sqrt{1+ \alpha^2}$};
	\end{tikzpicture}
	\end{multicols}

Respectively,
{\small
\begin{itemize}
		\item if $T<\sqrt{1+R^2}$, there are no solutions: we cannot find any $1$-Lipschitz continuous curve $\gamma_i$ connecting $P_i$ to $Q_R$ within time $T$, for $i = 1,2$,
		\item if $T \geq \sqrt{1+R^2}$, a minimizer is $\eeta=\frac{1}{2}(\delta_{\gamma_1}+\delta_{\gamma_2})$, with
        $\gamma_i(t)=P_i+t\dfrac{Q_{\bar\alpha}-P_i}{|Q_{\bar\alpha}-P_i|}$ for $t\in [0,\bar s]$, $\gamma_i(t)=Q_{\min \{\bar\alpha + t-\bar s, R\}}$ for $t\in[\bar s,T]$,
		where $\bar\alpha:=\max \left\{0, \frac{1-(T-R)^2}{2(T-R)}\right\}$ and $\bar{s}:= \sqrt{1 + \bar\alpha^2}$. In particular, for any $T \geq R+1$ we have $\bar\alpha = 0$ : the two populations move along the $y$-axis until they join at $(0,0)$, then they move together towards $Q_R$ along the $x$-axis.
	\end{itemize}
}
	
	Notice that, differently for the cases analyzed in the literature (cf. \cite{BCM}) where the multiplicity was invariant by time reparametrizations, here the position of the point $Q_\alpha$ where the trajectories first meet depends on the time horizon $T$.
\end{example}

\section{Existence of minimizers}\label{sec:existence-opt}
The following lemma is a crucial ingredient to prove the existence of minimizers in \eqref{eq:prob}. A similar result is Theorem 1.1 of \cite{FKZ}, but with a gap in the proof. 

\begin{lemma}\label{lemma:semicont}
	Let $\mathbb X$ be a separable metric space, $\{g_n\}_{n\in\mathbb N}$ be a sequence of Borel functions $g_n:\mathbb X\to [0,+\infty]$, and $\{\theta_n\}_{n\in\mathbb N}\subseteq \mathscr P(\mathbb X)$, $\theta\in\mathscr P(\mathbb X)$ 
	be such that $\theta_n\weakto\theta$ as $n\to +\infty$.
	Then,
\[\liminf_{n\to +\infty}\int_{\mathbb X} g_n(x)\d\theta_n(x)\ge \int_{\mathbb X} g(x)\d\theta(x),\]
where $g(x):=\displaystyle\min\left\{\liminf_{n\to +\infty}g_n(x),\liminf_{\substack{n\to +\infty\\ y\to x}}g_n(y)\right\}$.
\end{lemma}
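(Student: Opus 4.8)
The plan is to build, from the merely Borel functions $g_n$, a doubly-indexed family of lower semicontinuous minorants that captures the temporal limit in $n$ and the spatial limit $y\to x$ simultaneously, and then to pass to the limit using narrow convergence together with monotone convergence. The point is precisely that the $g_n$ are not assumed lower semicontinuous, so the classical inequality for a fixed l.s.c. integrand against narrowly convergent measures is not directly available.

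First I would reduce to uniformly bounded integrands. Replacing $g_n$ by $g_n\wedge M$ replaces $g$ by $g\wedge M$, because $r\mapsto r\wedge M$ is continuous and nondecreasing and thus commutes with the two $\liminf$ operations defining $g$; since $\int_{\mathbb X}(g\wedge M)\d\theta\uparrow\int_{\mathbb X}g\d\theta$ by monotone convergence and $\int g_n\d\theta_n\ge\int(g_n\wedge M)\d\theta_n$, it suffices to treat each fixed $M$. So I assume $0\le g_n\le M$.

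Next, for $k,j\in\N$ I set
\[ f_{k,j}(x):=\inf\{\,g_n(y):\ n\ge k,\ d(y,x)<1/j\,\}, \]
where $d$ is the metric of $\mathbb X$. These are Borel, bounded by $M$, nondecreasing in $k$ and $j$, and---taking the admissible choice $y=x$---satisfy $f_{k,j}\le g_n$ pointwise for every $n\ge k$. Splitting the infimum into the contributions $y=x$ and $y\ne x$ shows $\sup_{k,j}f_{k,j}=g$; in particular the admissibility of $y=x$ is exactly what produces the minimum in the definition of $g$. From $f_{k,j}\le g_n$ for $n\ge k$ I obtain $\liminf_n\int g_n\d\theta_n\ge\liminf_n\int f_{k,j}\d\theta_n$. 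Passing to the lower semicontinuous envelope $\bar f_{k,j}$ of $f_{k,j}$ and approximating it from below by the bounded $i$-Lipschitz inf-convolutions $x\mapsto\inf_y[f_{k,j}(y)+i\,d(x,y)]$, the narrow convergence $\theta_n\weakto\theta$ and monotone convergence give $\liminf_n\int f_{k,j}\d\theta_n\ge\int\bar f_{k,j}\d\theta$.

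The final and most delicate step, where I expect the main obstacle to lie (and presumably the gap in \cite{FKZ}), is to prove $\sup_{k,j}\bar f_{k,j}=g$, so that $\int\bar f_{k,j}\d\theta\uparrow\int g\d\theta$ along the diagonal $\bar f_{m,m}$ and the proof closes. Passing to the envelope could a priori erase the spatial liminf; the key is an open/closed ball comparison: if $d(y,x)<1/(2j)$ then $B(y,1/(2j))\subseteq B(x,1/j)$, whence $f_{k,2j}(y)\ge f_{k,j}(x)$ for all such $y$, and therefore $\bar f_{k,2j}(x)\ge f_{k,j}(x)$. Taking the supremum over $k,j$ yields $\sup_{k,j}\bar f_{k,j}\ge\sup_{k,j}f_{k,j}=g$, while $\bar f_{k,j}\le f_{k,j}\le g$ gives the reverse inequality.
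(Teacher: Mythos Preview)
Your argument is correct. The paper follows the same overall strategy---construct l.s.c.\ minorants of the $g_n$ whose supremum recovers $g$, then combine narrow lower semicontinuity of integrals of l.s.c.\ functions with monotone convergence---but organizes the construction more economically. Instead of your double family $f_{k,j}(x)=\inf\{g_n(y):n\ge k,\ d(y,x)<1/j\}$, the paper sets
\[
u_m(x):=\sup\bigl\{\inf\{g_n(y):y\in U,\ n\ge m\}:\ U\subseteq\mathbb X\text{ open},\ x\in U\bigr\},
\]
i.e.\ it performs the supremum over all open neighbourhoods \emph{inside} the definition. This makes each $u_m$ lower semicontinuous by construction (your ball-inclusion inequality $\bar f_{k,2j}\ge f_{k,j}$ is, in effect, a disguised proof of exactly this fact, since $u_m=\sup_j f_{m,j}$), so no truncation to bounded integrands, no Moreau--Yosida regularization, and no passage to envelopes are needed. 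One has directly $u_m\le g_n$ for $n\ge m$, $u_m\uparrow g$, and the chain
\[
\int_{\mathbb X} g\d\theta=\sup_m\int_{\mathbb X} u_m\d\theta\le\sup_m\liminf_n\int_{\mathbb X} u_m\d\theta_n\le\liminf_n\int_{\mathbb X} u_n\d\theta_n\le\liminf_n\int_{\mathbb X} g_n\d\theta_n
\]
closes in one line. Your route reaches the same destination with more scaffolding; the trade-off is that your $f_{k,j}$ are explicit and in fact upper semicontinuous (the sets $\{f_{k,j}<c\}$ are open), which makes their Borel measurability immediate, whereas the paper must argue lower semicontinuity of $u_m$ separately.
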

\begin{proof}
	For any $m\in\mathbb N$, define 
\[u_m(x):=\sup\Big\{\inf\{g_n(y):\, y\in U,\,n\in\mathbb N,\,n\ge m\}:\,U\subseteq \mathbb X,\,U\textrm{ open},\,x\in U\Big\}.\]
	We prove that $u_m$ is l.s.c. for any $m\in\N$. For any $\varepsilon>0$ there exists an open neighbourhood $U_\varepsilon$ of $x$ such that, for all $z\in U_\varepsilon$,
	\begin{align*}
		u_m(x)&-\varepsilon\le\inf\{g_n(y):\,y\in U_\varepsilon,\,n\ge m\}\\
		\le&\sup\Big\{\inf\{g_n(y):\, y\in W,\,n\ge m\}:\,W\subseteq \mathbb{X},\,W\textrm{ open},\,z\in W\Big\}=u_m(z). 
	\end{align*}
	The ls.c. of $u_m(\cdot)$ follows taking the liminf for $z\to x$ and letting $\varepsilon\to 0^+$.
	\begin{equation*}
	\begin{split}
		&\sup_{m\in\mathbb N}u_m(x)=\\
		&=\sup\left\{\min\left\{\inf_{n\ge m} g_n(x),\inf_{n\ge m}\inf_{y\in U\setminus\{x\}} g_n(y)\right\}\right\}:\,\textrm{open }U\subseteq \mathbb X,\,x\in U,m\in\mathbb N\Big\}\\
		&=\sup_{m\in\mathbb N}\min\left\{\inf_{n\ge m} g_n(x),\sup_{\substack{U\textrm{ open}\\ x\in U}}\inf_{n\ge m}\inf_{y\in U\setminus\{x\}} g_n(y)\right\}\\
		&=\min\left\{\liminf_{n\to +\infty}g_n(x),\liminf_{\substack{n\to +\infty\\ y\to x}}g_n(y)\right\}=:g(x).
		\end{split}
	\end{equation*}
	Since $\{u_m\}_{m\in\mathbb N}$ is an increasing seq. of nonnegative Borel functions, with $\sup_m u_m=g$ and $u_m\le g_m$, by
	Monotone Convergence Thm and Lemma 5.17 in \cite{AGS}, we get
	\begin{align*}
		\int_{\mathbb X} g(x)\d\theta(x)=&\sup_{m}\int_{\mathbb X} u_m(x)\d\theta(x)\le \sup_m \liminf_{n\to +\infty}\int_{\mathbb X} u_m(x)\d\theta_n(x)\\
		\le&\liminf_{n\to +\infty}\int_{\mathbb X} u_n(x)\d\theta_n(x)\le\liminf_{n\to +\infty}\int_{\mathbb X} g_n(x)\d\theta_n(x).
	\end{align*}
\end{proof}

\begin{corollary}\label{cor:lscabs}
	Let $\mathbb X$ be a separable metric space, $\Lambda$ be a separable topological space, $h:\mathbb X\times\Lambda\to [0,+\infty[$ be a sequentially l.s.c. map.
	Suppose that $\{(\theta_n,\lambda_n)\}_{n\in\N}\subseteq \prob(\mathbb X)\times \Lambda$, $(\theta,\lambda)\in \prob(\mathbb X)\times\Lambda$, 
	satisfies $(\theta_n,\lambda_n)\to (\theta,\lambda)$ in $\prob(\mathbb X)\times\Lambda$ endowed with the product topology.
	Then
	\[\liminf_{n\to +\infty}\int_{\mathbb X}h(x,\lambda_n)\d\theta_n(x)\ge \int_{\mathbb X} h(x,\lambda)\d\theta(x).\]
\end{corollary}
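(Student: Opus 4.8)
The plan is to reduce the statement to Lemma \ref{lemma:semicont} by freezing the $\Lambda$-variable along the converging sequence. I would set $g_n(x) := h(x,\lambda_n)$ for $x\in\mathbb X$ and $n\in\N$. Since $h$ is sequentially l.s.c. and $\mathbb X$ is metric, each section $g_n=h(\cdot,\lambda_n)$ is l.s.c., hence Borel, so $\{g_n\}_{n\in\N}$ is an admissible sequence for the lemma; moreover, projecting the product convergence $(\theta_n,\lambda_n)\to(\theta,\lambda)$ onto the first factor gives $\theta_n\weakto\theta$. Writing $g(x):=\min\{\liminf_n g_n(x),\ \liminf_{n\to+\infty,\,y\to x}g_n(y)\}$ as in the lemma, it then suffices to establish the pointwise bound $g(x)\ge h(x,\lambda)$ for every $x\in\mathbb X$, since monotonicity of the integral together with Lemma \ref{lemma:semicont} would yield
\[
\liminf_{n\to+\infty}\int_{\mathbb X}h(x,\lambda_n)\d\theta_n(x)\ge\int_{\mathbb X}g(x)\d\theta(x)\ge\int_{\mathbb X}h(x,\lambda)\d\theta(x).
\]

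To bound the first term defining $g$, I would fix $x$ and apply the sequential lower semicontinuity of $h$ to the sequence $(x,\lambda_n)\to(x,\lambda)$, obtaining $\liminf_n g_n(x)=\liminf_n h(x,\lambda_n)\ge h(x,\lambda)$. The second, genuinely two-variable, term is the main point, and I would argue by contradiction. Suppose $\liminf_{n\to+\infty,\,y\to x}g_n(y)<h(x,\lambda)$ and choose a real number $c$ strictly in between (recall $h$ is finite-valued). By the very definition of this punctured double liminf made explicit in the proof of Lemma \ref{lemma:semicont}, namely $\sup_{m}\sup_{U\ni x \text{ open}}\inf_{n\ge m}\inf_{y\in U\setminus\{x\}}g_n(y)$, the value being $<c$ forces, for the particular choices $m=k$ and $U=B(x,1/k)$, the existence of an index $n_k\ge k$ and a point $y_k\in B(x,1/k)\setminus\{x\}$ with $h(y_k,\lambda_{n_k})=g_{n_k}(y_k)<c$. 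Here I use crucially that $\mathbb X$ is metric, hence first-countable at $x$, to replace the supremum over neighborhoods by the balls $B(x,1/k)$ and extract an honest diagonal sequence.

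Then $y_k\to x$ in $\mathbb X$, while $\lambda_{n_k}\to\lambda$ in $\Lambda$ as a subsequence of the convergent sequence $\{\lambda_n\}$ (the extraction guarantees $n_k\ge k\to+\infty$); hence $(y_k,\lambda_{n_k})\to(x,\lambda)$ in the product topology. Sequential lower semicontinuity of $h$ now gives $h(x,\lambda)\le\liminf_k h(y_k,\lambda_{n_k})\le c<h(x,\lambda)$, a contradiction. Therefore both terms defining $g(x)$ dominate $h(x,\lambda)$, and so does their minimum, completing the proof. The only delicate step is this last one: one must read the double liminf as the punctured quantity appearing inside Lemma \ref{lemma:semicont} and use the metrizability of $\mathbb X$ to convert the sup-over-neighborhoods into a diagonal sequence, taking care that the extraction forces $n_k\to+\infty$ so that $\lambda_{n_k}\to\lambda$ persists. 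Notably, no structure on $\Lambda$ beyond the assumed convergence $\lambda_n\to\lambda$ is required.
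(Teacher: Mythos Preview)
Your proof is correct and follows exactly the paper's approach: set $g_n(x):=h(x,\lambda_n)$, verify Borel measurability, show that both terms defining $g(x)$ dominate $h(x,\lambda)$, and invoke Lemma~\ref{lemma:semicont}. The paper states the key bound $\liminf_{n\to+\infty,\,y\to x}h(y,\lambda_n)\ge h(x,\lambda)$ without argument, whereas you supply the diagonal-sequence extraction that justifies it; this added detail is sound and faithful to the punctured double-$\liminf$ appearing in the proof of Lemma~\ref{lemma:semicont}.
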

\begin{proof}
	Defined, $g_n(x):=h(x,\lambda_n)$, $g_n(\cdot)$ is a Borel map and we have 
	\[
	\min\left\{\liminf_{n\to +\infty}g_n(x),\liminf_{\substack{n\to +\infty\\ y\to x}}g_n(y)\right\}\ge\min\left\{h(x,\lambda),\liminf_{\substack{n\to +\infty\\ y\to x}}h(y,\lambda_n)\right\} = h(x,\lambda).
	\]
	We then apply Lemma \ref{lemma:semicont}.
\end{proof}

\begin{proposition}\label{prop:lscinner}
	Let $\mathbb X, \mathbb Y, \mathbb W$ be separable metric spaces, $f:\mathbb X\times \prob(\mathbb W)\times [0,1]\to [0,+\infty]$ 
	be sequentially l.s.c. with $f(x,\theta,\cdot)$ non-increasing for every $(x,\theta)\in \mathbb X\times\prob(\mathbb W)$, $r\in C^0(\mathbb W;\mathbb Y)$, $C:\mathbb X\rightrightarrows \mathbb Y$ be an u.s.c. set valued map with closed non-empty images.
	Then the map $h_{r,C}:\mathbb X\times \prob(\mathbb W)\to [0,+\infty]$, defined by $h_{r,C}(x,\theta)=f(x,\theta,r_\sharp\theta(C(x)))$, is sequentially l.s.c.
\end{proposition}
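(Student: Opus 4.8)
The plan is to reduce everything to the scalar quantity $m(x,\theta):=r_\sharp\theta(C(x))=\theta(r^{-1}(C(x)))\in[0,1]$, which is the last argument fed into $f$, and to exploit that $f$ is \emph{non-increasing} in that argument. Fix a converging sequence $(x_n,\theta_n)\to(x,\theta)$ in $\mathbb X\times\prob(\mathbb W)$ and write $m_n:=m(x_n,\theta_n)$, $m_\infty:=m(x,\theta)$. The core claim is the one-sided estimate
\begin{equation*}
\limsup_{n\to+\infty} m_n\le m_\infty. \tag{$\ast$}
\end{equation*}
Granting $(\ast)$, the conclusion follows by a subsequence argument: pick $n_k$ realizing $\liminf_n f(x_n,\theta_n,m_n)$ and, using compactness of $[0,1]$, refine it so that $m_{n_k}\to s^\star$ for some $s^\star\in[0,1]$; by $(\ast)$ we have $s^\star\le m_\infty$. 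Then $(x_{n_k},\theta_{n_k},m_{n_k})\to(x,\theta,s^\star)$, so sequential l.s.c. of $f$ gives $\liminf_n f(x_n,\theta_n,m_n)=\lim_k f(x_{n_k},\theta_{n_k},m_{n_k})\ge f(x,\theta,s^\star)$, and finally $f(x,\theta,s^\star)\ge f(x,\theta,m_\infty)=h_{r,C}(x,\theta)$ because $f(x,\theta,\cdot)$ is non-increasing and $s^\star\le m_\infty$. This is where monotonicity is indispensable: without it the uncontrolled direction of $m_n$ would be fatal.

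It remains to prove $(\ast)$. First I would record that push-forward is continuous along narrow convergence: since $r$ is continuous, $\int\varphi\,\d(r_\sharp\theta_n)=\int\varphi\circ r\,\d\theta_n\to\int\varphi\circ r\,\d\theta$ for every bounded continuous $\varphi:\mathbb Y\to\R$, hence $r_\sharp\theta_n\weakto r_\sharp\theta=:\nu$, writing $\nu_n:=r_\sharp\theta_n$. The difficulty is that $m_n=\nu_n(C(x_n))$ involves both a narrowly moving measure and a moving closed set, while the Portmanteau theorem only controls $\nu_n$ of a \emph{fixed} closed set. To decouple the two I would approximate $C(x)$ from outside by the open neighborhoods $V_j:=\{y\in\mathbb Y:\ \mathrm{dist}(y,C(x))<1/j\}$, whose closures $\overline{V_j}$ decrease with $\bigcap_j\overline{V_j}=C(x)$ (using that $C(x)$ is closed). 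For fixed $j$, since $V_j$ is open and contains $C(x)$, upper semicontinuity of $C$ at $x$ together with $x_n\to x$ yields $C(x_n)\subseteq V_j\subseteq\overline{V_j}$ for all $n$ large; monotonicity of $\nu_n$ then gives $\nu_n(C(x_n))\le\nu_n(\overline{V_j})$, and Portmanteau on the closed set $\overline{V_j}$ gives $\limsup_n\nu_n(\overline{V_j})\le\nu(\overline{V_j})$. Chaining these, $\limsup_n m_n\le\nu(\overline{V_j})$ for every $j$, and letting $j\to+\infty$ with continuity from above of the finite measure $\nu$ yields $\limsup_n m_n\le\nu(C(x))=m_\infty$, which is $(\ast)$.

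The main obstacle is precisely this interaction between the moving set and the moving measure in $(\ast)$: one must convert the u.s.c. of the set-valued map $C$ into a genuine measure-theoretic upper bound, which neither Portmanteau nor u.s.c. delivers alone. The outer closed approximation $\overline{V_j}\downarrow C(x)$ is the device that bridges them, and closedness of the images of $C$ is exactly what makes $\bigcap_j\overline{V_j}=C(x)$, hence what makes the limit in $j$ land on the correct value $\nu(C(x))$. The remaining ingredients, namely continuity of push-forward, the subsequence extraction in the compact interval $[0,1]$, and the monotonicity of $f$ in its last slot, are routine once $(\ast)$ is in hand.
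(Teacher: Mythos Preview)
Your proof is correct and follows essentially the same approach as the paper: both establish $\limsup_n r_\sharp\theta_n(C(x_n))\le r_\sharp\theta(C(x))$ via outer open approximation of $C(x)$, upper semicontinuity of $C$, and Portmanteau, then conclude using the l.s.c.\ of $f$ together with the monotonicity in the last argument. The only cosmetic differences are that you choose the explicit neighborhoods $V_j=\{y:\mathrm{dist}(y,C(x))<1/j\}$ and make the subsequence extraction in $[0,1]$ explicit, whereas the paper writes the final inequality $\liminf_n f(z_n,\theta_n,m_n)\ge f(x,\theta,\limsup_n m_n)$ directly.
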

\begin{proof}
	Fix $(x,\theta) \in \mathbb X\times \prob(\mathbb W)$. Given an open $V\supseteq C(x)$, by the u.s.c. of $C(\cdot)$ there exists an open $U_V\subseteq\mathbb X$, $x\in U_V$, such that $C(z)\subseteq V$ for all $z\in U_V$. In 
	particular, for any sequence $z_n\to x$ and $\theta_n \weakto \theta$, since $r_\sharp\theta_n\weakto r_\sharp\theta$, we can apply Portmanteau theorem (see e.g. Theorem 2.1 in \cite{BIL}) to get
	\[\limsup_{n\to +\infty}r_\sharp\theta_n(C(z_n))\le \limsup_{n\to +\infty}r_\sharp\theta_n(\overline V)\le r_\sharp\theta(\overline V).\] 
	By choosing a decreasing  sequence $\{V_j\}_{j\in\mathbb N}$ such that $V_j\supseteq C(x)$ for all  $j\in\mathbb N$ and $\displaystyle\bigcap_{j\in\mathbb N} \overline{V_j}=C(x)$, 
we obtain $\,\displaystyle\limsup_{n\to +\infty}r_\sharp\theta_n(C(z_n))\le r_\sharp\theta(C(x))$. 
	Hence,
	\begin{align*}
		&\liminf_{n \to +\infty} h_{r,C}(z_n,\theta_n)=\liminf_{n \to +\infty} f(z_n,\theta_n,r_\sharp\theta_n(C(z_n)))\\
		&\ge f\left(x,\theta,\limsup_{n \to +\infty}r_\sharp\theta_n(C(z_n))\right)
		\ge f(x,\theta,r_\sharp\theta(C(x)))=h_{r,C}(x,\theta).
	\end{align*}
\end{proof}

\begin{theorem}\label{thm:butlsc}
	Let $I=[a,b]$, $a\le b$, and $f:I\times\mathscr P(\Gamma_I)\times \R^d\times \R^d\times [0,1]\to [0,+\infty]$ be a Borel map satisfying: 
		$f(t,\cdot,\cdot,\cdot,\cdot)$ is sequentially l.s.c. for a.e. $t\in I$, 
		$f(t,\eeta,y,\cdot,s)$ is convex,
		$f(t,\eeta,y,z,\cdot)$ is non-increasing.
Fix $q\in [1,+\infty[$. Then,
\begin{itemize}
		\item[i)] the functional $K:\Gamma_I\times L^q(I;\R^d)\times \prob(\Gamma_I)\to [0,+\infty]$, with
		\[
		K(u,v,\eeta):=\int_I f(t,\eeta,u(t),v(t),{\ev_t}_\sharp\eeta(\{u(t)\}))\d t,
		\]
		is sequentially l.s.c. w.r.t. the strong topology on $\Gamma_I$, the weak topology on $L^q(I;\R^d)$ and the narrow topology on $\prob(\Gamma_I)$;
		\item[ii)] the map $\displaystyle\eeta\mapsto \int_{\Gamma_I}\int_I f(t,\eeta,\gamma(t),\dot\gamma(t),{\ev_t}_\sharp\eeta(\{\gamma(t)\}))\d t\d\eeta(\gamma)$
		is sequentially l.s.c. on the space $\prob(W^{1,q}(I; \R^d))$.
	\end{itemize}
	\end{theorem}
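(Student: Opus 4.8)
The plan is to establish i) directly and then to deduce ii) from it by a diagonal application of Lemma~\ref{lemma:semicont}. For i), fix sequences $u_n\to u$ in $\Gamma_I$, $v_n\weakto v$ in $L^q(I;\R^d)$ and $\eeta_n\weakto\eeta$ in $\prob(\Gamma_I)$; passing to a subsequence we may assume that $\liminf_n K(u_n,v_n,\eeta_n)$ is a finite limit. The guiding idea is to separate the dependence on \emph{measure, position and multiplicity}, which is controlled by the narrow-convergence machinery already developed, from the dependence on the \emph{velocity}, which is only weak and must be handled through convexity.

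For the non-velocity part, fix $t$ outside the null set where $f(t,\cdot,\cdot,\cdot,\cdot)$ is not sequentially l.s.c.\ and set $G_t(y,z,\eeta):=f(t,\eeta,y,z,{\ev_t}_\sharp\eeta(\{y\}))$. I apply Proposition~\ref{prop:lscinner} with $\mathbb X=\R^d\times\R^d$ (variable $(y,z)$), $\mathbb W=\Gamma_I$, $\mathbb Y=\R^d$, $r=\ev_t$ and the constant-in-$z$, continuous (hence u.s.c.\ and closed-valued) map $C(y,z)=\{y\}$: this yields that $G_t$ is sequentially l.s.c.\ on $\R^d\times\R^d\times\prob(\Gamma_I)$. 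Writing $g_n(t,z):=G_t(u_n(t),z,\eeta_n)$ and $g(t,z):=G_t(u(t),z,\eeta)$, the convergences $u_n(t)\to u(t)$ and $\eeta_n\weakto\eeta$ then give the epi-lower bound
\[g(t,z)\le\liminf_{\substack{n\to+\infty\\ z'\to z}}g_n(t,z')\qquad\text{for a.e.\ }t\in I,\]
while convexity of $f(t,\eeta,y,\cdot,s)$ makes each $g_n(t,\cdot)$ and $g(t,\cdot)$ convex.

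To handle the velocity, note that $v_n\weakto v$ forces $\{v_n\}$ to be bounded in $L^q(I;\R^d)$, so up to a further subsequence it generates a Young measure $\{\nu_t\}_{t\in I}$ whose barycenter is $\int_{\R^d}z\d\nu_t(z)=v(t)$ for a.e.\ $t$ (using Dunford--Pettis when $q=1$). Since the $g_n$ are nonnegative normal integrands (Borel in $t$ and l.s.c.\ in $z$), the lower-closure theorem for Young measures, fed with the epi-lower bound above, gives $\liminf_n\int_I g_n(t,v_n(t))\d t\ge\int_I\int_{\R^d}g(t,z)\d\nu_t(z)\d t$; Jensen's inequality applied to the convex $g(t,\cdot)$ against $\nu_t$, together with the barycenter identity, bounds the right-hand side below by $\int_I g(t,v(t))\d t=K(u,v,\eeta)$, proving i). For ii), put $h(\gamma,\eeta):=K(\gamma,\dot\gamma,\eeta)$ on $W^{1,q}(I;\R^d)\times\prob(W^{1,q}(I;\R^d))$: as strong convergence in $W^{1,q}$ yields strong convergence in $\Gamma_I$ and (a fortiori weak) convergence of the derivatives in $L^q$, part i) shows $h$ is sequentially l.s.c.\ for the strong topology on $W^{1,q}$ and the narrow topology on $\prob(W^{1,q})$. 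Applying Lemma~\ref{lemma:semicont} with $\theta_n=\eeta_n$ and $g_n(\gamma):=h(\gamma,\eeta_n)$, the lower semicontinuity of $h$ forces the associated function $g$ of that lemma to satisfy $g\ge h(\cdot,\eeta)$, whence $\liminf_n\int h(\gamma,\eeta_n)\d\eeta_n\ge\int h(\gamma,\eeta)\d\eeta$, which is exactly ii); this reproduces Corollary~\ref{cor:lscabs} but invoking Lemma~\ref{lemma:semicont} directly so as to allow the value $+\infty$.

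I expect the velocity step to be the crux. The weak $L^q$-convergence of $v_n$ cannot be folded into the narrow-convergence lemmas by regarding the (normalized) image measures of $t\mapsto(t,v_n(t))$ on $I\times\R^d$: Lemma~\ref{lemma:semicont} would then also take a liminf in the $t$-direction, where $f$ is merely measurable and \emph{not} l.s.c., destroying the bound. Keeping $t$ as a parameter is precisely what forces the use of a genuine Young-measure lower-closure result together with the epi-lower bound from Proposition~\ref{prop:lscinner}. A secondary, routine technical point is the joint Borel measurability in $t$ of the multiplicity $t\mapsto{\ev_t}_\sharp\eeta_n(\{u_n(t)\})$, required for the normal-integrand structure, which follows from a measurable-projection argument.
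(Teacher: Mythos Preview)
Your proof is correct and follows the same overall architecture as the paper's: Proposition~\ref{prop:lscinner} is used to absorb the multiplicity into a sequentially l.s.c.\ integrand $h(t,\eeta,y,z)=f(t,\eeta,y,z,{\ev_t}_\sharp\eeta(\{y\}))$, a lower-closure result then handles the weak $L^q$-convergence of the velocities, and part~ii) is obtained from part~i) via Lemma~\ref{lemma:semicont}/Corollary~\ref{cor:lscabs}. The differences are only in presentation. For i) the paper does not run the Young-measure/Jensen argument explicitly but invokes Theorem~2.3.1 and Remark~2.2.6 of Buttazzo~\cite{But} as a black box, applied to the strongly convergent parameter $\xi_n(t):=(\eeta_n,u_n(t))\in\prob(\Gamma_I)\times\R^d$ together with the weakly convergent $v_n$; your argument is essentially what that theorem does internally. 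For ii) the paper applies Corollary~\ref{cor:lscabs} directly, while you go back to Lemma~\ref{lemma:semicont} so as to allow $K$ to take the value $+\infty$; this is a legitimate point, since Corollary~\ref{cor:lscabs} as stated assumes $h$ is finite-valued. Your version is more self-contained; the paper's is shorter but leans on the reader accepting that Buttazzo's result covers integrands with an additional strongly-convergent parameter taking values in a (non-locally-compact) metric space such as $\prob(\Gamma_I)$.
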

\begin{proof}
	Let $h:I\times \prob(\Gamma_I)\times \R^d\times \R^d\to [0,+\infty]$ be defined by $h(t,\eeta,y,z):=f(t,\eeta,y,z,e_{t\sharp}\eeta(\{y\}))$.
	Then, $h(\cdot)$ is a Borel map such that $h(t,\eeta,y,\cdot)$ is convex, and by Proposition \ref{prop:lscinner} with $\mathbb X=\R^d\times\R^d$, $\mathbb W=\Gamma_I$, $\mathbb Y=\R^d$, $C(x_1,x_2)=\{x_1\}$, $r=\ev_t$,
	we have that $h(t,\cdot,\cdot,\cdot)$ is sequentially l.s.c.
	By Theorem 2.3.1 and Remark 2.2.6 in \cite{But}, the functional $K$ is sequentially l.s.c. with respect to the desired topologies. Indeed, it is sufficient to consider sequences $\{\eeta_n\}_n\subset \prob(\Gamma_I)$ s.t. $\eeta_n\weakto\eeta$ and $\{u_n\}_n\subset\Gamma_I$ s.t. $u_n\to u$, and $\{v_n\}_n\subset L^q(I;\R^d)$ s.t. $v_n\weakto v$. Then apply the cited results in \cite{But} to the sequence $(\xi_n,v_n)\in L^p(I;\prob(\Gamma_I)\times\R^d)\times L^q(I;\R^d)$, where $\xi_n(t):=(\eeta_n,u_n(t))$.
	We now prove $ii)$.	We can write
	\[
	\int_{\Gamma_I}\int_I f(t,\eeta,\gamma(t),\dot\gamma(t),{\ev_t}_\sharp\eeta(\{\gamma(t)\}))\d t\d\eeta(\gamma)=\int_{\Gamma_I} K(\gamma,\dot\gamma,\eeta)\d\eeta(\gamma)
	\]
	and notice that for all $\gamma\in W^{1,q}(I; \R^d)$ and $\eeta\in\prob(W^{1,q}(I; \R^d))$ the map $(\gamma,\eeta)\mapsto K(\gamma,\dot\gamma,\eeta)$ is sequentially l.s.c. by $(i)$.
	We conclude by Corollary \ref{cor:lscabs}, with $\mathbb X=W^{1,q}(I; \R^d)$ and $\Lambda=L^q(I;X)\times\prob(W^{1,q}(I; \R^d))$ endowed with the product weak-narrow topology.
\end{proof}

\begin{proposition}\label{prop:lsc}
	Let $q\in[1,+\infty[$. The functional $\E_{\psi,\phi}:\prob(W^{1,q}(I; \R^d))\to [0,+\infty]$ of Definition \ref{def:cost} is sequentially l.s.c. with respect to narrow convergence.
\end{proposition}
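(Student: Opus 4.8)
The plan is to reduce $\E_{\psi,\phi}$ to the abstract functional whose lower semicontinuity was established in Theorem~\ref{thm:butlsc}. Recall that $\E_{\psi,\phi}=\L_\psi+\G_\phi$, so since the sum of two sequentially l.s.c. nonnegative functionals is again sequentially l.s.c., it suffices to treat $\L_\psi$ and $\G_\phi$ separately.

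\begin{proof}
By linearity it is enough to prove that both $\L_\psi$ and $\G_\phi$ are sequentially l.s.c. with respect to narrow convergence on $\prob(W^{1,q}(I;\R^d))$, since $\E_{\psi,\phi}$ is their sum.

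\emph{Lower semicontinuity of $\L_\psi$.} We apply Theorem~\ref{thm:butlsc}~$ii)$ with the choice
\[
f(t,\eeta,y,z,s):=\psi(s)\cdot L(t,y,z,{\ev_t}_\sharp\eeta).
\]
We must check that this $f$ satisfies the hypotheses of the theorem. The map $f$ is Borel as a product of Borel maps, using the Borel measurability of $\psi$ (it is l.s.c.), the assumed measurability properties of $L$, and the continuity of $\eeta\mapsto {\ev_t}_\sharp\eeta$. For a.e.\ $t$, sequential lower semicontinuity of $f(t,\cdot,\cdot,\cdot,\cdot)$ follows from that of $\psi$, of $L(t,\cdot,\cdot,\cdot)$, and of $\eeta\mapsto {\ev_t}_\sharp\eeta$, together with nonnegativity (the product of nonnegative l.s.c.\ maps is l.s.c.). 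Convexity of $f(t,\eeta,y,\cdot,s)$ in $z$ holds because $\psi(s)\ge 0$ is a constant multiplier and $L(t,y,\cdot,\mu)$ is convex. Finally $f(t,\eeta,y,z,\cdot)$ is non-increasing in $s$ because $\psi$ is non-increasing and $L\ge 0$. With these verified, Theorem~\ref{thm:butlsc}~$ii)$ yields that $\eeta\mapsto\L_\psi(\eeta)$ is sequentially l.s.c., upon noting that ${\ev_t}_\sharp\eeta(\{\gamma(t)\})=\mul{t}{\gamma(t)}{\eeta}$ by Definition~\ref{def:mult}.

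\emph{Lower semicontinuity of $\G_\phi$.} The terminal cost depends only on the single time $t=b$, so Theorem~\ref{thm:butlsc} is not directly needed; instead we invoke Proposition~\ref{prop:lscinner} applied at the endpoint. Write
\[
\G_\phi(\eeta)=\int_{\Gamma_I}\phi\big(\mul{b}{\gamma(b)}{\eeta}\big)\cdot G(\gamma(b),\ev_{b\sharp}\eeta)\d\eeta(\gamma),
\]
and set $g(\eeta,y):=\phi\big({\ev_b}_\sharp\eeta(\{y\})\big)\cdot G(y,{\ev_b}_\sharp\eeta)$. Proposition~\ref{prop:lscinner}, with $\mathbb X=\R^d$, $\mathbb W=\Gamma_I$, $r=\ev_b$, $C(y)=\{y\}$, and the l.s.c.\ non-increasing-in-last-slot integrand $(y,\theta,s)\mapsto \phi(s)\cdot G(y,\theta)$, shows that $(y,\eeta)\mapsto g(\eeta,y)$ is sequentially l.s.c. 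If $\eeta_n\weakto\eeta$, then also $({\ev_b},\mathrm{Id})_\sharp\eeta_n\weakto ({\ev_b},\mathrm{Id})_\sharp\eeta$ on $\R^d\times\Gamma_I$, so by Corollary~\ref{cor:lscabs} (integrating the l.s.c.\ map $g$ against the narrowly converging measures $({\ev_b})_\sharp\eeta_n$, with $\eeta$ itself as the converging parameter) we obtain
\[
\liminf_{n\to+\infty}\G_\phi(\eeta_n)\ge \G_\phi(\eeta).
\]

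Combining the two parts, $\E_{\psi,\phi}=\L_\psi+\G_\phi$ is sequentially l.s.c.\ with respect to narrow convergence on $\prob(W^{1,q}(I;\R^d))$.
\end{proof}

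The step I expect to be the main obstacle is the verification of the measurability and joint lower-semicontinuity hypotheses for the integrand $f$ in the $\L_\psi$ part, specifically handling the composition of $\psi$ with the multiplicity $s\mapsto{\ev_t}_\sharp\eeta(\{y\})$ and ensuring the product of potentially infinite-valued l.s.c.\ factors remains l.s.c.; one must be slightly careful that $\psi$ is only l.s.c.\ (not continuous) and that both factors are nonnegative so that no indeterminate $0\cdot\infty$ issues arise in the product, which is the reason the statement restricts to $[0,+\infty]$-valued nonnegative data throughout.
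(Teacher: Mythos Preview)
Your proof is correct and follows the paper's approach: for $\L_\psi$ you define the same $f(t,\eeta,y,z,s)=\psi(s)\,L(t,y,z,{\ev_t}_\sharp\eeta)$ and apply Theorem~\ref{thm:butlsc}(ii), and for $\G_\phi$ you make explicit the ``analogous argument'' the paper leaves to the reader, namely Proposition~\ref{prop:lscinner} together with Corollary~\ref{cor:lscabs} at the single time $t=b$. One cosmetic point: in the $\G_\phi$ step the second argument of the integrand in Proposition~\ref{prop:lscinner} must live in $\prob(\Gamma_I)$, so the correct choice is $f(y,\eeta,s)=\phi(s)\,G(y,{\ev_b}_\sharp\eeta)$ rather than $\phi(s)\,G(y,\theta)$ with $\theta\in\prob(\R^d)$; this is harmless since $\eeta\mapsto{\ev_b}_\sharp\eeta$ is narrowly continuous.
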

\begin{proof}
	Define the function	$f(t,\eeta,x,z,s) := \psi(s) \cdot L(t,x,z,{\ev_t}_\sharp\eeta)$.
	Then, $f$ satisfies the assumptions of Theorem \ref{thm:butlsc} and point $(ii)$ of the same Theorem provides the l.s.c. of $\mathscr L_\psi(\cdot)$. An analogous argument proves the l.s.c. of $\mathscr G_\phi(\cdot)$. 
\end{proof}

As a consequence of previous results, we get the following.
\begin{theorem}[Existence of minimizers]
	Let $T>0$ and $\mu\in\prob_2(\R^d)$. Then, the functional $\E_{\psi,\phi}(\cdot)$ admits a minimizer in $\boldsymbol S_{[0,T]}(\mu)$.
\end{theorem}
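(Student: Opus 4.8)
The plan is to run the direct method of the calculus of variations, drawing compactness of the feasible set from Definition~\ref{def:dynamics} and lower semicontinuity of the cost from Proposition~\ref{prop:lsc}. First I would dispose of the degenerate cases: if $\boldsymbol S_{[0,T]}(\mu)=\emptyset$ there is nothing to prove, while if $m:=\inf_{\eeta\in\boldsymbol S_{[0,T]}(\mu)}\E_{\psi,\phi}(\eeta)=+\infty$ then every admissible $\eeta$ is trivially a minimizer. Hence I may assume $m<+\infty$ and fix a minimizing sequence $\{\eeta_n\}_n\subseteq\boldsymbol S_{[0,T]}(\mu)$ with $\E_{\psi,\phi}(\eeta_n)\to m$.

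Next I would invoke compactness. By Definition~\ref{def:dynamics} the set $\boldsymbol S_{[0,T]}(\mu)$ has $W_2$-compact image in $\prob_2(\Gamma_{[0,T]})$, so up to a (non-relabeled) subsequence $\eeta_n\to\bar\eeta$ in $W_2$ with $\bar\eeta\in\boldsymbol S_{[0,T]}(\mu)$; in particular $\eeta_n\weakto\bar\eeta$ narrowly in $\prob(\Gamma_{[0,T]})$ and, by Prokhorov, the sequence is tight. It then remains to apply the lower semicontinuity of $\E_{\psi,\phi}$ to obtain $\E_{\psi,\phi}(\bar\eeta)\le\liminf_n\E_{\psi,\phi}(\eeta_n)=m$, whence $\bar\eeta$ minimizes, since feasibility forces $\E_{\psi,\phi}(\bar\eeta)\ge m$.

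The delicate point is that Proposition~\ref{prop:lsc} is stated on $\prob(W^{1,q}(I;\R^d))$, so I must upgrade the narrow convergence obtained in $\prob(\Gamma_{[0,T]})$ to the mode of convergence under which that proposition applies. Here I would use that each $\eeta_n$ and $\bar\eeta$ are concentrated on absolutely continuous solutions of $\dot\gamma(t)\in F(t,\gamma(t),\theta_t)$: by the growth condition on $F$ in Definition~\ref{def:dynamics}, on any family of such solutions that is uniformly bounded in $\Gamma_{[0,T]}$ the velocities are a.e.\ controlled by the positions, so the curves are uniformly Lipschitz and thus bounded in $W^{1,q}(I;\R^d)$ for a fixed $q\in(1,+\infty)$. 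Combining this with the tightness of $\{\eeta_n\}_n$, I would reduce, up to an arbitrarily small mass, to measures supported on $W^{1,q}$-bounded, uniformly Lipschitz sets; on such sets, for $q>1$, the uniform topology of $\Gamma_{[0,T]}$ and the weak topology of $W^{1,q}$ coincide (by reflexivity and the compact embedding $W^{1,q}(I)\hookrightarrow\Gamma_I$), so that the narrow convergence $\eeta_n\weakto\bar\eeta$ in $\prob(\Gamma_{[0,T]})$ transfers to narrow convergence in $\prob(W^{1,q}(I;\R^d))$ as required.

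I expect the genuine obstacle to be exactly this reconciliation of topologies in the third paragraph, namely checking that the compactness furnished by $\prob_2(\Gamma_{[0,T]})$ indeed delivers the convergence under which Proposition~\ref{prop:lsc} grants lower semicontinuity (including the weak $L^q$-convergence of the velocities implicit in Theorem~\ref{thm:butlsc}); the remaining direct-method skeleton—minimizing sequence, extraction of a convergent subsequence, and application of lower semicontinuity—is entirely routine.
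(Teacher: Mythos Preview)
Your proposal is correct and follows the same direct-method strategy as the paper: compactness of $\boldsymbol S_{[0,T]}(\mu)$ from Definition~\ref{def:dynamics} plus sequential lower semicontinuity from Proposition~\ref{prop:lsc}. In fact you are more careful than the paper's own one-line proof, which simply invokes both ingredients without addressing the reconciliation between $W_2$-compactness in $\prob_2(\Gamma_{[0,T]})$ and the $\prob(W^{1,q})$ setting of Proposition~\ref{prop:lsc} that you rightly flag as the delicate point.
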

\begin{proof}
The results follows since, by Proposition \ref{prop:lsc}, $\E_{\psi,\phi}(\cdot)$ is sequentially l.s.c. over $\prob(W^{1,q}([0,T]; \R^d))$, for any $q \in [1,+\infty[$ and  $\boldsymbol S_{[0,T]}(\mu)$ is a $W_2$-compact subset of $\prob_2(\Gamma_{[0,T]})$ as assumend in Definition \ref{def:dynamics}.
\end{proof}

\section{Dynamic Programming Principle}\label{sec:DPP}
We conclude proving a Dynamic Programming Principle for the \emph{value function} associated with the optimization problem \eqref{eq:prob} and defined by $V_T:[0,T]\times\prob_2(\R^d)$,
\[V_T(t,\mu):=\inf_{\eeta\in \boldsymbol S_{[t,T]}(\mu)}\E_{\psi,\phi}(\eeta).\]

\begin{definition}[Restriction and concatenation]\label{def:conccurv}
	Let $I,I_1,I_2,J\subseteq\mathbb R$ be compact intervals, $b\in I$, satisfying $I=I_1\cup I_2$, $J\subseteq I$, $I_1\cap I_2=\{b\}$.
\begin{enumerate}
		\item Given $\gamma\in\Gamma_I$, we define the \emph{restriction} $\gamma|_J:J\to X$ by $\gamma|_J(t)=\gamma(t)$ for all $t\in J$.
		\item Given $\gamma_i\in\Gamma_{I_i}$, $i=1,2$, with $\gamma_1(b)=\gamma_2(b)$, we define the \emph{concatenation} $\gamma_1\oplus\gamma_2\in\Gamma_I$ of $\gamma_1$ and $\gamma_2$ to be 
		the unique $\gamma\in \Gamma_I$ s.t. $\gamma|_{I_i}=\gamma_i$, $i=1,2$.
		\item Given  $\eeta\in\prob(\Gamma_{I})$, we define the \emph{restriction} $\boldsymbol\eta|_J$ of $\boldsymbol\eta$ to $J$ by setting, for all $\varphi:\Gamma_J\to\R$ bounded Borel function, 
		$\displaystyle\int_{\Gamma_J}\varphi\d\boldsymbol\eta|_J=\int_{\Gamma_I}\varphi(\gamma|_J)\d\boldsymbol\eta(\gamma).$
		\item Given $\eeta_i\in\prob(\Gamma_{I_i})$, $i=1,2$, such that $(\ev_b)_\sharp\eeta_1=(\ev_b)_\sharp\eeta_2=:\mu_b$, we define the \emph{concatenation} $\eeta_1\oplus\eeta_2\in\prob(\Gamma_{I})$
		by setting, for all $\varphi:\Gamma_I\to\R$ bounded Borel function,
		\[\int_{\Gamma_{I}}\varphi\d(\eeta_1\oplus\eeta_2):=\int_X\iint_{\Gamma_{I_1}\times\Gamma_{I_2}}\varphi(\gamma_1\oplus\gamma_2)\d\eeta_{1,x}(\gamma_1)\d\eeta_{2,x}(\gamma_2)\d\mu_b(x),\]
		where $\{\eeta_{i,x}\}_{x\in X}\subset\prob(\Gamma_{I_i})$, $i=1,2$ are the Borel family of measures obtained by disintegrating $\eeta_i$ with respect to the evaluation map $\ev_b$ 
		(cf. Theorem 5.3.1 in \cite{AGS}). Since for $\mu_b$-a.e. $x\in \R^d$ and $\boldsymbol\eta_{i,x}$-a.e. $\gamma_i\in \prob(\Gamma_{I_i})$, $i=1,2$, we have $\gamma_1(b)=\gamma_2(b)=x$,
		the integrand is well defined.
	\end{enumerate}
	\end{definition}

The time depended nature of the involved functionals well behaves with respect to restriction and concatenation, as the following proposition shows.
\begin{proposition}\label{prop:cost+}
	Let $I,I_1,I_2,J\subseteq\mathbb R$ be compact intervals, $b\in I$, satisfying $I=I_1\cup I_2$, $J\subseteq I$, $I_1\cap I_2=\{b\}$.
	Then the following hold:
{\small	\begin{enumerate}
		\item $\mul{t}{y}{\eeta}=\mul{t}{y}{\eeta|_J}$, for any $t\in J$, $y\in \R^d$;
		\item\label{item2} $\L_\psi(\eeta_1\oplus\eeta_2)=\L_\psi(\eeta_1)+\L_\psi(\eeta_2)$ for all $\eeta_i\in\prob(\Gamma_{I_i})$, $i=1,2$, with $\ev_{b\sharp}\eeta_1=\ev_{b\sharp}\eeta_2$;
		\item\label{item3} $\G_\phi(\eeta_1\oplus\eeta_2)=\G_\phi(\eeta_2)$ for all $\eeta_i\in\prob(\Gamma_{I_i})$, $i=1,2$, with $\ev_{b\sharp}\eeta_1=\ev_{b\sharp}\eeta_2$ and $\max\,I=\max\,I_2$;
		\item $\L_\psi(\eeta)\ge\L_\psi(\eeta|_J)$, $\G_\phi(\eeta)=\G_\phi(\eeta|_{I_2})$, for any $\eeta\in\prob(\Gamma_I)$ with $\max\,I=\max\,I_2$.
	\end{enumerate}
	}
\end{proposition}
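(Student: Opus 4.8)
The plan is to deduce all four items from a single commutation principle, so I would prove item (1) first and use it throughout. Writing $R_J\colon\Gamma_I\to\Gamma_J$ for the restriction map $\gamma\mapsto\gamma|_J$, the defining identity $\int_{\Gamma_J}\varphi\d\eeta|_J=\int_{\Gamma_I}\varphi(\gamma|_J)\d\eeta(\gamma)$ says precisely that $\eeta|_J=(R_J)_\sharp\eeta$. For $t\in J$ one has $\ev_t\circ R_J=\ev_t$, since $(\gamma|_J)(t)=\gamma(t)$, whence ${\ev_t}_\sharp(\eeta|_J)={\ev_t}_\sharp\eeta$ as measures on $\R^d$; evaluating both at $\{y\}$ gives $\mul{t}{y}{\eeta|_J}=\mul{t}{y}{\eeta}$, which is item (1). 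I would then record the dual fact for concatenation: $(\eeta_1\oplus\eeta_2)|_{I_i}=\eeta_i$. This follows by feeding $\varphi(\gamma|_{I_1})$ into the concatenation formula, using $(\gamma_1\oplus\gamma_2)|_{I_1}=\gamma_1$, integrating out $\eeta_{2,x}$ (a probability measure, so mass $1$), and recombining $\{\eeta_{1,x}\}$ into $\eeta_1$ via disintegration; symmetrically on $I_2$. Together with item (1), this yields the substitutions $\mul{t}{y}{\eeta_1\oplus\eeta_2}=\mul{t}{y}{\eeta_i}$ and ${\ev_t}_\sharp(\eeta_1\oplus\eeta_2)={\ev_t}_\sharp\eeta_i$ whenever $t\in I_i$, which are what make the costs split.

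For items (2) and (3) I would unfold $\L_\psi(\eeta_1\oplus\eeta_2)$ and $\G_\phi(\eeta_1\oplus\eeta_2)$ through the concatenation formula. In $\L_\psi$, splitting the time integral as $\int_I=\int_{I_1}+\int_{I_2}$ (the junction $\{b\}$ being Lebesgue-negligible), the $I_1$-block of the integrand becomes $\psi(\mul{t}{\gamma_1(t)}{\eeta_1})\,L(t,\gamma_1(t),\dot\gamma_1(t),{\ev_t}_\sharp\eeta_1)$: this uses the substitutions above together with $(\gamma_1\oplus\gamma_2)(t)=\gamma_1(t)$ and the fact that the velocity of $\gamma_1\oplus\gamma_2$ coincides with $\dot\gamma_1(t)$ for a.e.\ $t$ in the interior of $I_1$. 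Since this block depends on $\gamma_1$ alone, integrating out $\eeta_{2,x}$ and recombining gives $\L_\psi(\eeta_1)$, and symmetrically the $I_2$-block gives $\L_\psi(\eeta_2)$. For $\G_\phi$, the terminal time $\max I=\max I_2$ lies in $I_2$, so $(\gamma_1\oplus\gamma_2)(\max I)=\gamma_2(\max I)$ and the whole integrand depends only on $\gamma_2$; integrating out $\eeta_{1,x}$ leaves exactly $\G_\phi(\eeta_2)$.

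Finally, item (4) I would obtain by pushing $\L_\psi(\eeta|_J)$ and $\G_\phi(\eeta|_{I_2})$ through $\eeta|_J=(R_J)_\sharp\eeta$. Using item (1) to replace the multiplicity and the push-forward of $\eeta|_J$ by those of $\eeta$, and using $(\gamma|_J)(t)=\gamma(t)$ together with the matching of velocities for a.e.\ $t$ in the interior of $J$, one finds $\L_\psi(\eeta|_J)=\int_{\Gamma_I}\int_J(\cdots)\d t\,\d\eeta(\gamma)$ with the same nonnegative integrand appearing in $\L_\psi(\eeta)$, but integrated over $J\subseteq I$ only; nonnegativity of $\psi$ and $L$ then forces $\L_\psi(\eeta|_J)\le\L_\psi(\eeta)$. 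The terminal identity $\G_\phi(\eeta)=\G_\phi(\eeta|_{I_2})$ is the same computation as in item (3), with $\eeta_2$ replaced by $\eeta|_{I_2}$ and with equality, since $\max I=\max I_2$ means the evaluation at the terminal time is untouched by restriction to $I_2$.

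I expect the only genuine care to be bookkeeping rather than analysis. The first point is the notational clash between the junction point, written $b$ in the concatenation setup, and the terminal time, also written $b$ in the definition of $\G_\phi$: I would fix a symbol such as $\max I$ for the latter to keep them apart, and note that the junction is where $\gamma_1\oplus\gamma_2$ may fail to be differentiable yet is negligible in the time integral. The second point is ensuring the conditional measures $\eeta_{i,x}$ are genuine probabilities, so that integrating out the unused factor contributes a factor $1$, and invoking Tonelli's theorem for the nonnegative integrands against the product structure $\d\eeta_{1,x}\,\d\eeta_{2,x}\,\d\mu_b$. No compactness, convexity, or lower semicontinuity enters: everything reduces to the commutation of evaluation with restriction and concatenation.
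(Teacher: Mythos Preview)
The paper states Proposition~\ref{prop:cost+} without proof, treating it as routine bookkeeping; your proposal supplies exactly the natural argument one would give. Identifying $\eeta|_J=(R_J)_\sharp\eeta$ and establishing $(\eeta_1\oplus\eeta_2)|_{I_i}=\eeta_i$ from the disintegration formula are the right organizing observations, and the remaining unwinding of the definitions of $\L_\psi$ and $\G_\phi$ is carried out correctly, including the points that actually need care (Lebesgue-negligibility of the junction $\{b\}$ for the velocity mismatch, Tonelli for the nonnegative integrands, and the fact that the conditional measures $\eeta_{i,x}$ are probabilities so the unused factor integrates to $1$). There is no gap.
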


\begin{theorem}[Dynamic Programming Principle]\label{thm:dpp}
	Let $0\le\tau\le s\le T$, $\mu\in\prob_2(\R^d)$. Then, the following holds
{\small	\begin{align}\label{eq:dpp}
		V_T(\tau,\mu)=\inf_{\eeta\in\boldsymbol S_{[\tau,T]}(\mu)}\ell_{\eeta}(s),&&\ell_{\eeta}(s):=\left\{\mathscr L_{\psi}(\boldsymbol\eta|_{[\tau,s]})+V_T(s,{\ev_s}_\sharp\eeta)\right\}.
	\end{align}}
	Moreover, for any $\eeta\in \boldsymbol S_{[\tau,T]}(\mu)$ the map $s\mapsto \ell_{\eeta}(s)$ is nondecreasing in $[\tau,T]$, and it is constant if and only if $V_T(\tau,\mu)=\E_{\psi,\phi}(\eeta)$.
\end{theorem}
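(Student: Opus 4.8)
The plan is to first record the additive decomposition
\[
\E_{\psi,\phi}(\eeta)=\L_\psi(\eeta|_{[\tau,s]})+\E_{\psi,\phi}(\eeta|_{[s,T]}),\qquad \eeta\in\boldsymbol S_{[\tau,T]}(\mu),
\]
which follows from Proposition \ref{prop:cost+}: splitting the time integral defining $\L_\psi$ at $s$ and invoking items (1) and (4) yields $\L_\psi(\eeta)=\L_\psi(\eeta|_{[\tau,s]})+\L_\psi(\eeta|_{[s,T]})$, while $\G_\phi(\eeta)=\G_\phi(\eeta|_{[s,T]})$ is exactly item (4). I would also verify the two compatibility facts that make the statement meaningful: that $\eeta|_{[s,T]}\in\boldsymbol S_{[s,T]}({\ev_s}_\sharp\eeta)$, because the restricted curves solve the inclusion for the restricted flow (as ${\ev_t}_\sharp\eeta={\ev_t}_\sharp(\eeta|_{[s,T]})$ for $t\ge s$), and symmetrically that the restriction to $[\tau,s]$ remains admissible.

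For the inequality $V_T(\tau,\mu)\ge\inf_\eeta\ell_\eeta(s)$ I would take any $\eeta\in\boldsymbol S_{[\tau,T]}(\mu)$, combine the decomposition with $\E_{\psi,\phi}(\eeta|_{[s,T]})\ge V_T(s,{\ev_s}_\sharp\eeta)$ (valid since the restriction is admissible for the tail problem) to get $\E_{\psi,\phi}(\eeta)\ge\ell_\eeta(s)$, and pass to the infimum. The reverse inequality is the substantial one and needs a gluing argument: fixing $\eeta$ and $\eps>0$, I would choose $\zeta\in\boldsymbol S_{[s,T]}({\ev_s}_\sharp\eeta)$ with $\E_{\psi,\phi}(\zeta)\le V_T(s,{\ev_s}_\sharp\eeta)+\eps$ and form the concatenation $\tilde{\eeta}:=(\eeta|_{[\tau,s]})\oplus\zeta$, well defined because ${\ev_s}_\sharp(\eeta|_{[\tau,s]})={\ev_s}_\sharp\eeta={\ev_s}_\sharp\zeta$. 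By Proposition \ref{prop:cost+}(2)--(3) its cost is $\E_{\psi,\phi}(\tilde{\eeta})=\L_\psi(\eeta|_{[\tau,s]})+\E_{\psi,\phi}(\zeta)\le\ell_\eeta(s)+\eps$, so that $V_T(\tau,\mu)\le\ell_\eeta(s)+\eps$ once admissibility of $\tilde{\eeta}$ is granted; letting $\eps\to0^+$ and taking the infimum over $\eeta$ closes the equality.

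The main obstacle is precisely verifying $\tilde{\eeta}\in\boldsymbol S_{[\tau,T]}(\mu)$. The initial condition ${\ev_\tau}_\sharp\tilde{\eeta}=\mu$ is immediate, but the graph/support condition must be checked against the self-referential flow $\{{\ev_t}_\sharp\tilde{\eeta}\}_t$. The key point is that this flow coincides with $\{{\ev_t}_\sharp\eeta\}$ on $[\tau,s]$ and with $\{{\ev_t}_\sharp\zeta\}$ on $[s,T]$; hence, using the disintegration description of $\tilde{\eeta}$ from Definition \ref{def:conccurv}, a generic curve in its support is $\gamma_1\oplus\gamma_2$ with $\gamma_1$ solving the inclusion on $[\tau,s]$ for the first flow and $\gamma_2$ solving it on $[s,T]$ for the second. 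Since the two flows match at $s$ and $\gamma_1(s)=\gamma_2(s)$, the concatenation is absolutely continuous on the whole of $[\tau,T]$ and solves the inclusion for $\{{\ev_t}_\sharp\tilde{\eeta}\}_t$, giving the required support inclusion.

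Finally, monotonicity and rigidity follow by applying the DPP on subintervals. For $\tau\le s_1\le s_2\le T$ the additivity of $\L_\psi$ gives $\ell_\eeta(s_2)-\ell_\eeta(s_1)=\L_\psi(\eeta|_{[s_1,s_2]})+V_T(s_2,{\ev_{s_2}}_\sharp\eeta)-V_T(s_1,{\ev_{s_1}}_\sharp\eeta)$, and the already-proven DPP applied on $[s_1,T]$ with the competitor $\eeta|_{[s_1,T]}$ yields $V_T(s_1,{\ev_{s_1}}_\sharp\eeta)\le\L_\psi(\eeta|_{[s_1,s_2]})+V_T(s_2,{\ev_{s_2}}_\sharp\eeta)$; hence the difference is nonnegative and $s\mapsto\ell_\eeta(s)$ is nondecreasing. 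Evaluating at the endpoints gives $\ell_\eeta(\tau)=V_T(\tau,\mu)$ (the Lagrangian over the degenerate interval vanishes and ${\ev_\tau}_\sharp\eeta=\mu$) and $\ell_\eeta(T)=\E_{\psi,\phi}(\eeta)$ (as $V_T(T,\cdot)$ reduces to $\G_\phi$ on the constant curves of the degenerate interval). Since a nondecreasing function is constant iff its values at the endpoints coincide, $\ell_\eeta$ is constant iff $V_T(\tau,\mu)=\E_{\psi,\phi}(\eeta)$, which is the asserted equivalence.
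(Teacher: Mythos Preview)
Your argument is correct and follows the same route as the paper: the additive splitting $\E_{\psi,\phi}(\eeta)=\L_\psi(\eeta|_{[\tau,s]})+\E_{\psi,\phi}(\eeta|_{[s,T]})$, one inequality via restriction and the other via concatenation with an $\varepsilon$-optimal tail, both relying on Proposition~\ref{prop:cost+}. You actually supply more than the paper does: the admissibility check $\tilde{\eeta}\in\boldsymbol S_{[\tau,T]}(\mu)$ is merely asserted there, and the paper's proof does not address the ``Moreover'' clause at all, whereas your monotonicity/endpoint argument for $s\mapsto\ell_{\eeta}(s)$ is complete. One small remark: the equality $\L_\psi(\eeta)=\L_\psi(\eeta|_{[\tau,s]})+\L_\psi(\eeta|_{[s,T]})$ really comes from splitting the time integral and item~(1) alone (item~(4) only gives an inequality for $\L_\psi$), so your citation is slightly off, but the claim itself is correct.
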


\begin{proof}
	For any $\varepsilon>0$ there exists $\eeta^\eps\in \boldsymbol S_{[\tau,T]}(\mu)$ such that
{\small
	\begin{align*}
		V_T&(\tau,\mu)+\eps\ge\E_{\psi,\phi}(\eeta^\eps)
		=\mathscr{L}_\psi(\boldsymbol\eta^\varepsilon|_{[\tau,s]}\oplus\boldsymbol\eta^\varepsilon|_{[s,T]} )+\mathscr G_\phi(\boldsymbol\eta^\varepsilon|_{[s,T]})\\
		=&\mathscr{L}_\psi(\boldsymbol\eta^\varepsilon|_{[\tau,s]})+\mathscr{L}_\psi(\boldsymbol\eta^\varepsilon|_{[s,T]})+\mathscr G_\phi(\boldsymbol\eta^\varepsilon|_{[s,T]})
		=\mathscr{L}_\psi(\boldsymbol\eta^\varepsilon|_{[\tau,s]})+\E_{\psi,\phi}(\boldsymbol\eta^\varepsilon|_{[s,T]})\\
		\ge&\mathscr{L}_\psi(\boldsymbol\eta^\varepsilon|_{[\tau,s]})+V_T(s,\ev_{s\sharp}\boldsymbol\eta^\varepsilon|_{[s,T]})=\mathscr{L}_\psi(\boldsymbol\eta^\varepsilon|_{[\tau,s]})+V_T(s,\ev_{s\sharp}\boldsymbol\eta^\varepsilon)
		=\ell_{\boldsymbol\eta^\varepsilon}(s).
	\end{align*}
}	Thus, we have $\displaystyle V_T(\tau,\mu)\ge \inf_{\eeta\in\boldsymbol S_{[\tau,T]}(\mu)}\ell_{\eeta}(s)$.
	\par\medskip\par
	We prove the converse inequality. Let $\boldsymbol\eta\in \boldsymbol S_{[\tau,T]}(\mu)$, $\tau\le s\le T$, and set $\boldsymbol\eta_1=\boldsymbol\eta|_{[\tau,s]}$. For any $\varepsilon>0$,
	there is $\boldsymbol\eta_2^\varepsilon\in S_{[s,T]}(\ev_{s\sharp}\boldsymbol\eta)$ such that $V_T(s,\ev_{s\sharp}\boldsymbol\eta)+\varepsilon\ge \E_{\psi,\phi}(\eeta_2^\eps)$.
	Notice that $\ev_{s\sharp} \eeta_2^\eps=\ev_{s\sharp} \boldsymbol\eta$ and $\boldsymbol\eta_1\oplus\boldsymbol\eta_2^\varepsilon \in \boldsymbol S_{[\tau,T]}(\mu)$.
	Thus,
\small{	\begin{align*}
		V_T(\tau,\mu)\le&\mathscr L_\psi(\boldsymbol\eta_1\oplus\boldsymbol\eta_2^\varepsilon)+\mathscr G_\phi(\boldsymbol\eta_1\oplus\boldsymbol\eta_2^\varepsilon)=\mathscr L_\psi(\boldsymbol\eta_1)+\mathscr L_\psi(\boldsymbol\eta_2^\varepsilon)+\mathscr G_\phi(\boldsymbol\eta_2^\varepsilon)\\
		=&\mathscr L_\psi(\boldsymbol\eta_1)+\E_{\psi,\phi}(\eeta_2^\eps)
		\le\mathscr L_\psi(\boldsymbol\eta_1)+V_T(s,\ev_{s\sharp}\boldsymbol\eta)+\varepsilon
	\end{align*}}
	By letting $\varepsilon\to 0^+$ and recalling the definition of $\boldsymbol\eta_1$, we obtain $V_T(\tau,\mu)\le\mathscr L_\psi(\boldsymbol\eta|_{[\tau,s]})+V_T(s,\ev_{s\sharp}\boldsymbol\eta)$.
\end{proof}

\end{document}